\newcommand{\sysn}{\left\{\begin{array}{rcl}}
\newcommand{\sysk}{\end{array}\right.}
\newtheorem{theorem}{Theorem}[section]
\theoremstyle{example}
\newtheorem{example}[theorem]{Example}
\theoremstyle{definition}
\newtheorem{corollary}[theorem]{Corollary}
\begin{document}

\begin{frontmatter}



\title{On separability of the functional space with the open-point and bi-point-open
topologies, II}


\author{Alexander V. Osipov}

\ead{OAB@list.ru}

\address{Ural Federal
 University, Institute of Mathematics and Mechanics, Ural Branch of the Russian Academy of Sciences, 16,
 S.Kovalevskaja street, 620219, Ekaterinburg, Russia}

\begin{abstract}
In this paper we continue to study the property of separability of
functional space $C(X)$ with the open-point and bi-point-open
topologies. We show that for every perfect Polish space $X$ a set
$C(X)$ with bi-point-open topology is a separable space. We also
show in a set model (the iterated perfect set model) that for
every regular space $X$ with countable network  a set $C(X)$ with
bi-point-open topology is a separable space if and only if a
dispersion character $\Delta(X)=\mathfrak{c}$.
\end{abstract}

\begin{keyword}
open-point topology \sep bi-point-open topology \sep separability


\MSC 54C40 \sep 54C35 \sep 54D60 \sep 54H11 \sep 46E10

\end{keyword}

\end{frontmatter}



\section{Introduction}

The space $C(X)$ with the topology of pointwise convergence is
denoted by $C_{p}(X)$. It has a subbase consisting of sets of the
form

$[x,U]^{+}=\{f\in C(X): f(x)\in U\}$,

where $x\in X$ and $U$ is an open subset of real line
$\mathbb{R}$.

In paper \cite{amk} was introduced two new topologies on $C(X)$
that we call the open-point topology and the bi-point-open
topology. The open-point topology on $C(X)$ has a subbase
consisting of sets of the form

$[V,r]^{-}=\{f\in C(X): f^{-1}(r)\bigcap V\neq\emptyset\}$,

where $V$ is an open subset of $X$ and $r\in \mathbb{R}$. The
open-point topology on $C(X)$ is denoted by $h$ and the space
$C(X)$ equipped with the open-point topology $h$ is denoted by
$C_{h}(X)$.

Now the bi-point-open topology on $C(X)$ is the join of the
point-open topology $p$ and the open-point topology $h$. It is the
topology having subbase open sets of both kind: $[x,U]^{+}$ and
$[V,r]^{-}$, where $x\in X$ and $U$ is an open subset of
$\mathbb{R}$, while $V$ is an open subset of $X$ and $r\in
\mathbb{R}$. The bi-point-open topology on the space $C(X)$ is
denoted by $ph$ and the space $C(X)$ equipped with the
bi-point-open topology $ph$ is denoted by $C_{ph}(X)$. One can
also view the bi-point-open topology on $C(X)$ as the weak
topology on $C(X)$ generated by the identity maps $id_{1}:
C(X)\mapsto C_{p}(X)$ and $id_{2}: C(X)\mapsto C_{h}(X)$.

 In \cite{amk} and \cite{amk1}, the separation and countability properties of these two
 topologies on $C(X)$ have been studied.

In \cite{amk} the following statements were proved.

\medskip

$\bullet$ $C_{h}(\mathbb{P})$ is separable where $\mathbb{P}$ is
the set of irrational numbers. (Proposition 5.1.)

$\bullet$ If $C_{h}(X)$ is separable, then every open subset of
$X$ is uncountable. (Theorem 5.2.)

$\bullet$ If $X$ has a countable $\pi$-base consisting of
nontrivial connected sets, then $C_{h}(X)$ is separable. (Theorem
5.5.)

$\bullet$ If $C_{ph}(X)$ is separable, then every open subset of
$X$ is uncountable. (Theorem 5.8.)

$\bullet$ If $X$ has a countable $\pi$-base consisting of
nontrivial connected sets and a coarser metrizable topology, then
$C_{ph}(X)$ is separable. (Theorem 5.10.)

\medskip

In \cite{os1} was definition a $\mathcal{I}$-set and was proved
that necessary condition for $C_{h}(X)$ be a separable space is
condition: $X$ has a $\pi$-network consisting of
$\mathcal{I}$-sets.

\medskip

$\bullet$  A set $A\subseteq X$ be called {\it $\mathcal{I}$-set}
if there is a continuous function $f\in C(X)$ such that $f(A)$
contains an interval $\mathcal{I}=[a,b]\subset \mathbb{R}$.

$\bullet$ If $C_{h}(X)$ is separable space, then $X$ has a
$\pi$-network consisting of $\mathcal{I}$-sets. (Theorem 2.3.)

\medskip

In this paper we use the following conventions. The symbols
$\mathbb{R}$, $\mathbb{P}$, $\mathbb{Q}$ and $\mathbb{N}$ denote
the space of real numbers, irrational numbers, rational numbers
and natural numbers, respectively. Recall that a dispersion
character $\Delta(X)$ of $X$ is the minimum of cardinalities of
its nonempty open subsets.

Recall also that a space be called  Polish space if it is a
separable complete metrizable space.

 By {\it set of reals} we mean a zero-dimensional,
separable metrizable space every non-empty open set which has the
cardinality the continuum.

\section{Main results}

Note that if the space $C_{h}(X)$ is a separable space then
$\Delta(X)\geq\mathfrak{c}$. Really, if $A=\{f_{i}\}$ is a
countable dense set of $C_{h}(X)$ then for each non-empty open set
$U$ of $X$ we have $\bigcup f_{i}(U)=\mathbb{R}$. It follows that
$|U|\geq\mathfrak{c}$.

Also note that if the space $C_{ph}(X)$ is a separable space then
$C_{p}(X)$ is a separable space and $C_{h}(X)$ is a separable. It
follows that $X$ is a separable submetrizable (coarser separable
metric topology) space and $\Delta(X)=\mathfrak{c}$.

Recall that a family $\gamma$ of subsets of a space $X$ is called
$T_0$-separating if whenever $x$ and $y$ are distinct points of
$X$, there exists $V\in \gamma$ containing exactly one of the
points $x$ and $y$.

In \cite{os1} was proved the following result.

\begin{theorem}  If $X$ is a Tychonoff space with network consisting
non-trivial connected sets, then the following are equivalent.

\begin{enumerate}

\item $C_{ph}(X)$ is a separable space.

\item $X$ is a separable submetrizable space.

\end{enumerate}
\end{theorem}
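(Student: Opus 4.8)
The plan is to establish the two implications separately, the implication $(1)\Rightarrow(2)$ being essentially free and $(2)\Rightarrow(1)$ carrying all of the content. Throughout I use that the bi-point-open topology $ph$, being the join of $p$ and $h$, refines the point-open topology $p$.

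For $(1)\Rightarrow(2)$ I would simply invoke the remark recorded immediately before the statement. If $D\subseteq C(X)$ is countable and dense in $C_{ph}(X)$, then since $ph$ refines $p$ the same $D$ is dense in $C_p(X)$, so $C_p(X)$ is separable; by the standard identity $d(C_p(X))=iw(X)$ this forces $iw(X)=\aleph_0$, i.e. $X$ condenses onto a second countable space, so $X$ carries a coarser separable metrizable topology. Together with the separability already extracted in that remark this is precisely condition (2). I would also record here, for use below, that each network element $N\in\mathcal N$ is connected and has at least two points, hence stays connected in any coarser (in particular metric) topology $\tau'$; thus $(N,\tau')$ is a nondegenerate connected metric space, so $|N|\ge\mathfrak c$ and $\Delta(X)=\mathfrak c$. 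Note that this connectedness hypothesis is genuinely restrictive: it excludes, for instance, uncountable discrete spaces, whose only connected subsets are singletons.

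For $(2)\Rightarrow(1)$, fix a coarser separable metrizable topology $\tau'$ given by a metric $\rho$ and a network $\mathcal N$ of nondegenerate connected sets. Since $(X,\tau')$ is second countable, fix a countable base $\mathcal B=\{B_n\}$ for $\tau'$ (each $B_n$ being $\tau$-open), and fix a countable $\{g_n\}\subseteq C(X)$ dense in $C_p(X)$, which exists because $iw(X)=\aleph_0$. The goal is a countable $D\subseteq C(X)$ meeting every nonempty basic $ph$-open set $O=\bigcap_{i\le n}[x_i,U_i]^{+}\cap\bigcap_{j\le m}[V_j,r_j]^{-}$. Given such an $O$, I would first choose, by density of $\{g_n\}$ in $C_p(X)$, a function $g$ with $g(x_i)\in U_i$ for every $i$, so that the point-open part of $O$ is already satisfied. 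It then remains to repair $g$ so that it additionally attains each prescribed value $r_j$ somewhere in $V_j$, without pushing any $g(x_i)$ out of $U_i$. For each $j$ I would select, using $\mathcal B$ and $\mathcal N$, a network element $N_j\subseteq V_j$, arranging the finitely many $N_j$ and a $\rho$-neighborhood of the finite set $\{x_i\}$ to be pairwise disjoint; this is possible because $\tau'$ is metric (points are separated) and each $N_j$ has at least two points, so one may slide $N_j$ off the finite set $\{x_i\}$. Since $N_j$ is connected in $\tau$ and metrized by $\rho$, a $\tau$-continuous function that is made nonconstant on $N_j$ takes, by the intermediate value property on the connected set $N_j$, every value between its extremes; hence one installs a continuous bump $b_j$, supported in a small $\rho$-ball around $N_j$ disjoint from all other data, driving $g+b_j$ to the value $r_j$ at some point of $N_j\subseteq V_j$. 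Summing the finitely many disjointly supported bumps yields $f=g+\sum_j b_j\in C(X)$ with $f\in O$.

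The main obstacle, and the delicate point of the argument, is converting this per-instance repair into a \emph{single fixed} countable family $D$, rather than one depending on $O$. To this end I would parametrize the entire construction by countable data: the functions $g_n$, finite tuples drawn from the base $\mathcal B$, rational target values, and rational bump heights, with all bumps built explicitly from the fixed metric $\rho$; the resulting countable family $D$ is then shown to be dense. The genuine interplay one must manage is that the point-open constraints $\{x_i\}$ may lie \emph{inside} the open sets $V_j$ where bumps are required, so the connected network elements $N_j$ and the bump supports must be chosen inside $V_j$ yet away from $\{x_i\}$. It is exactly the nondegeneracy and connectedness of the $N_j$, in the presence of the separable metric $\tau'$, that simultaneously permit ``avoid the finite set'' and ``realize the prescribed real value by the intermediate value property,'' and the separability of $\tau'$ (the countable base $\mathcal B$ together with the countable dense $\{g_n\}$) is what keeps the final family $D$ countable.
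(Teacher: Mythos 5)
Your proposal is correct and follows essentially the same route as the paper's own argument (given there for the more general Theorem~\ref{th2}, which subsumes this statement): the forward implication is extracted from separability of $C_{p}(X)$, and the converse builds a countable dense family parametrized by the countable base of the coarser separable metric topology and rational values, meeting each constraint $[V_j,r_j]^{-}$ by forcing values below and above $r_j$ at two points of a nondegenerate connected network element inside $V_j$ and then applying the intermediate value property on that connected set. The only organizational difference is that you perturb a countable dense subset of $C_{p}(X)$ (obtained from $d(C_{p}(X))=iw(X)$) by disjointly supported rational-height bumps, whereas the paper's dense set consists of fixed functions taking prescribed rational constant values on closures of finitely many disjoint basic sets of the metric topology, so the point-open constraints $[x_i,U_i]^{+}$ are handled there by rational constants $v_i\in V_i$ rather than by $C_{p}$-density.
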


 In the present paper, we consider more wider form this theorem.

\begin{theorem}\label{th2} If $X$ is a Tychonoff space with $\pi$-network consisting
non-trivial connected sets, then the following are equivalent.
\begin{enumerate}

\item $C_{ph}(X)$ is a separable space.

\item $X$ has a countable $T_0$-separating family of zero-sets.

\item $X$ is a separable submetrizable space.

\end{enumerate}

\end{theorem}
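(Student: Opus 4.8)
The plan is to prove the cycle $(2)\Rightarrow(1)\Rightarrow(3)\Rightarrow(2)$, so that the connectedness hypothesis is invoked only where it is genuinely needed, namely in the construction of a countable dense subset of $C_{ph}(X)$.

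The implication $(1)\Rightarrow(3)$ is the easy one and is essentially recorded just before the theorem: the identity map $C_{ph}(X)\to C_p(X)$ is a continuous surjection, so a countable dense subset of $C_{ph}(X)$ maps to a countable dense subset of $C_p(X)$; hence $C_p(X)$ is separable, which by the classical equality $d(C_p(X))=iw(X)$ forces $iw(X)=\aleph_0$, i.e. $X$ condenses onto a second-countable space, i.e. $X$ is separable submetrizable. For $(3)\Rightarrow(2)$ I would use the standard characterisation of separable submetrizability: a condensation of $X$ onto a separable metric space $M$ composes with an embedding $M\hookrightarrow[0,1]^{\omega}$ to produce countably many continuous functions $f_n\colon X\to[0,1]$ separating the points of $X$, and the zero-sets $f_n^{-1}(0)$ (after an affine adjustment in each coordinate) then form a countable $T_0$-separating family of zero-sets.

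The substantive part is $(2)\Rightarrow(1)$. Starting from a countable $T_0$-separating family of zero-sets $Z_n=f_n^{-1}(0)$, note that the functions $\{f_n\}$ separate the points of $X$. Let $\mathcal A$ be the (countable) family of all functions $P(f_{n_1},\dots,f_{n_k})$ with $P$ a polynomial with rational coefficients; these lie in $C(X)$, and I would first check that $\mathcal A$ is dense in $C_p(X)$: given distinct points $z_1,\dots,z_m$, since the $f_n$ separate points a finite subtuple sends them to distinct points of some $\mathbb R^{p}$, and a Lagrange interpolation polynomial (with rationally approximated coefficients) matches prescribed values there up to $\varepsilon$. To upgrade to density in the finer topology $ph$, take a nonempty basic open set $W=\bigcap_{i}[x_i,U_i]^{+}\cap\bigcap_{j}[V_j,r_j]^{-}$, witnessed by some $f\in W$. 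Using the $\pi$-network, pick for each $j$ a non-trivial connected $C_j\in\mathcal N$ with $C_j\subseteq V_j$; being a connected set with more than one point in a Tychonoff space, $C_j$ is infinite, so I can choose distinct points $u_j,v_j\in C_j$, all of them distinct from the $x_i$ and from one another. Since finite subsets of a Tychonoff space are $C$-embedded, there is a continuous $\tilde f$ with $\tilde f(x_i)=f(x_i)\in U_i$, $\tilde f(u_j)=r_j-1$ and $\tilde f(v_j)=r_j+1$.

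Finally I would approximate $\tilde f$: by $C_p(X)$-density of $\mathcal A$ choose $g\in\mathcal A$ with $g(x_i)$ close enough to $\tilde f(x_i)$ that $g(x_i)\in U_i$, and with $|g(u_j)-\tilde f(u_j)|<1$ and $|g(v_j)-\tilde f(v_j)|<1$; then $g(u_j)<r_j<g(v_j)$, and since $C_j$ is connected and $g$ is continuous the intermediate value property yields a point of $C_j\subseteq V_j$ at which $g$ takes the value $r_j$. Hence $g\in W\cap\mathcal A$, so $\mathcal A$ is dense and $C_{ph}(X)$ is separable. The main obstacle I anticipate is the bookkeeping in this last construction: arranging the finitely many control points $\{x_i\}\cup\{u_j,v_j\}$ to be distinct so that the $C$-embedding step can assign compatible values, and verifying that pointwise closeness at exactly these points already forces membership in every open-point factor $[V_j,r_j]^{-}$ through connectedness. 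The density of $\mathcal A$ in $C_p(X)$ (the Stone--Weierstrass/interpolation core) is the other place where some care is required.
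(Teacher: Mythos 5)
Your proof is correct in its essentials, but it runs the equivalence cycle in the opposite direction from the paper and with different machinery, so a comparison is worthwhile. The paper proves $(1)\Rightarrow(2)\Rightarrow(3)\Rightarrow(1)$: for $(1)\Rightarrow(2)$ it extracts from a countable dense set $A\subset C_{ph}(X)$ the zero-sets $f^{-1}(\overline{B})$, $f\in A$, $B$ a bounded basic interval; for $(2)\Rightarrow(3)$ it builds a diagonal condensation of $X$ into the countable power $J(\omega)^{\omega}$ of the metrizable hedgehog; and its substantive step is $(3)\Rightarrow(1)$, where the countable dense set consists of functions taking prescribed rational constant values on finitely many basic sets (with disjoint closures) of the coarser metric topology. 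You instead prove $(2)\Rightarrow(1)\Rightarrow(3)\Rightarrow(2)$: you get $(1)\Rightarrow(3)$ from the $C_p$-theoretic equality $d(C_p(X))=iw(X)$, you get $(3)\Rightarrow(2)$ from an embedding into $[0,1]^{\omega}$, and your substantive step is $(2)\Rightarrow(1)$, where the dense set is the countable algebra $\mathcal{A}$ of rational polynomials in functions $f_n$ with $Z_n=f_n^{-1}(0)$ (these separate points precisely because the family is $T_0$-separating); density in $C_p(X)$ comes from Lagrange interpolation, and the upgrade to $ph$-density uses $C$-embedding of finite sets plus pointwise approximation. The heart of both arguments is the same: place two points of a connected $\pi$-network element $C_j\subseteq V_j$ at values straddling $r_j$ and let the intermediate value theorem produce a point of $g^{-1}(r_j)\cap V_j$. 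What your route buys is that the separating family of zero-sets feeds directly into the construction of the dense set, so submetrizability never enters the hard implication; what the paper's route buys is self-containedness, since it never invokes the $d(C_p(X))=iw(X)$ blackbox (whose proof is, in any case, essentially your interpolation argument).

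One step of yours needs repair: in $(3)\Rightarrow(2)$, rational level sets do not suffice. If ``affine adjustment in each coordinate'' means passing to the zero-sets $(f_n-q)^{-1}(0)=f_n^{-1}(q)$ for $q\in\mathbb{Q}$, the resulting family need not be $T_0$-separating: if $x\neq y$ are such that for every $n$ with $f_n(x)\neq f_n(y)$ both values are irrational, then every set $f_n^{-1}(q)$ contains either both or neither of $x,y$. The fix is exactly the device the paper uses in its own $(1)\Rightarrow(2)$: take preimages of closed rational intervals, $f_n^{-1}([p,q])$ with $p,q\in\mathbb{Q}$ (or sublevel sets $\{x: f_n(x)\le q\}$, which are zero-sets of $\max(f_n-q,0)$); given $f_n(x)<f_n(y)$, choose a rational interval around $f_n(x)$ missing $f_n(y)$. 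With this one-line change your cycle is sound.
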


\begin{proof}

$(1)\Rightarrow(2)$. Let $C_{ph}(X)$ be separable space. There is
  a countable dense subset $A=\{f_i\}$ of the space $C_{ph}(X)$.
  Fix $\beta=\{B_j\}$ some countable base for $\mathbb{R}$ consisting of bounded open intervals.

  Consider $\gamma=\{ f^{-1}(\overline{B}): f\in A, B\in \beta\}$.

  We show that a countable family $\gamma$ is required family.


 Let $x_1$ and $x_2$ be distinct points of $X$. Consider open
base set $Q=[\{x_1\},(c_1,d_1)]^{+}\bigcap
[\{x_2\},(c_2,d_2)]^{+}$ of the space $C_{ph}(X)$ where
$(c_i,d_i)\in \beta$ for $i=1,2$ and $\overline{(c_1,d_1)}\bigcap
\overline{(c_2,d_2)}=\emptyset$. There is $h\in Q\bigcap A$.
Clearly that $h^{-1}(\overline{(c_i,d_i)})\in \gamma$, $x_i\in
h^{-1}(\overline{(c_i,d_i)})$ for $i=1,2$ and

$h^{-1}(\overline{(c_1,d_1)})\bigcap
h^{-1}(\overline{(c_2,d_2)})=\emptyset$.

The countable family $\gamma$ is required family.

$(2)\Rightarrow(3)$.  $\gamma=\{Z_i\}$ be the countable family
with required conditions. We can assume that $\gamma$ is closed
under finite unions.

For each $i\in \mathbb{N}$ there is continuous function  $f_{i}: X
\mapsto I=[0,1]$ such that $Z_{i}=f_{i}^{-1}(0)$.

Let $I_{i}=I\times\{i\}$ for every $i\in \mathbb{N}$.

By letting

$(x,i_{1})\textit{E}(y,i_{2})$ wheneve  $x=0=y$ or $x=y$ and
$i_{1}=i_{2}$

we define an equivalence relation  $\textit{E}$ on the set
$\bigcup_{i\in \mathbb{N}} I_{i}$.

The formula

$$\rho([(x,i_{1})],[(y,i_{2})])= \left\{
\begin{array}{rcl}
|x-y|, \,  if  \,  \,   i_{1}=i_{2},\\
x+y, \, if    \, \,   i_{1}\neq i_{2},\\
\end{array}
\right.
$$

defines a metric on the set of equivalence classes of
$\textit{E}$. This space - as well as the corresponding metrizable
space - be called the {\it metrizable hedgehog of spininess
$\aleph_0$} and be denoted $J(\omega)$  (Example 4.1.5 in
~{\cite{eng}}).

 Note that for every  $i\in \mathbb{N}$ the mapping $j_{i}$ of the interval $I$ to $J(\omega)$ defined be letting
  $j_{i}(x)=[(x,i)]$ is a homeomorphic embedding. The family of
  all balls with rational radii around points of the form $[(r,i)]$, where $r$ is a rational number, is a base for
  $J(\omega)$; so that $J(\omega)$ is a separable metrizable
  space.

 The formula $h_{i}(x)=j_{i} (f_{i}(x))$ defines a continuos mapping $h_{i}: X \mapsto J(\omega)$. Note that the family
$\{h_{i}\}_{i=1}^{\infty}$ is functionally separates points of
$X$. Really let $x$ and $y$ be distinct points of $X$. There
exists $Z\in \gamma$ containing exactly one of the points $x$ and
$y$ and there are $i'\in \mathbb{N}$ and continuous function
$f_{i'}:X \mapsto I=[0,1]$ such that $Z=f_{i'}^{-1}(0)$. Hence
$h_{i'}(x)\neq h_{i'}(y)$. Thus diagonal mapping
$h=\triangle_{i\in \mathbb{N}} h_{i} : X \mapsto
J(\omega)^{\omega}$ is a continuous one-to-one mapping from $X$
into the separable metrizable space $J(\omega)^{\omega}$.

It follows that $X$ is a separable submetrizable space.

$(3)\Rightarrow(1)$.  Let $X$ be a separable submetrizable space,
  i.e. $X$ has coarser separable metric topology $\tau_1$ and $\gamma$ be $\pi$-network of $X$ consisting non-trivial
connected sets. Let $\beta=\{B_i\}$ be a countable base of
$(X,\tau_1)$. We can assume that $\beta$ closed under finite union
of its elements.

For each finite family $\{B_{s_i}\}_{i=1}^d\subset \beta$ such
that $\overline{B_{s_i}}\bigcap \overline{B_{s_j}}=\emptyset$ for
$i\neq j$ and $i,j\in \overline{1,d}$ and $\{p_i\}_{i=1}^d \subset
\mathbb{Q}$ we fix $f=f_{s_1,...,s_d,p_1...,p_d}\in C(X)$ such
that $f(\overline{B_{s_{i}}})=p_i$ for each $i=\overline{1,d}$.

Let $G$ be the set of functions $f_{s_1,...,s_d,p_1...,p_d}$ where
$s_i\in\mathbb{N}$ and $p_i\in \mathbb{Q}$ for $i\in\mathbb{N}$.
We claim that the countable set $G$ is dense set of $C_{ph}(X)$.

 By proposition 2.2 in \cite{amk}, let

 $W=[x_1,V_1]^{+}\bigcap...
\bigcap[x_m,V_m]^{+}\bigcap[U_1,r_1]^{-}\bigcap...\bigcap[U_n,r_n]^{-}$
be a base set of $C_{ph}(X)$ where $n,m\in\mathbb{N}$, $x_{i}\in
X$, $V_{i}$ is open set of $\mathbb{R}$ for $i\in \overline{1,m}$,
$U_{j}$ is open set of $X$ and $r_j\in \mathbb{R}$ for $j\in
\overline{1,n}$ and for $i\neq j$, $x_{i}\neq x_{j}$ and
$\overline{U_{i}}\bigcap \overline{U_{j}}=\emptyset$.

Choose $B_{s_l}\in \beta$ for $l=\overline{1,m+n}$ such that

1. $\overline{B_{s_{l_1}}}\bigcap
\overline{B_{s_{l_2}}}=\emptyset$ for $l_1\neq l_2$ and
$l_1,l_2\in \overline{1,n+m}$;

2.  $x_i\in B_{s_l}$ for $l\in \overline{1,m}$;

3.  $B_{s_l}\bigcap U_{k}\neq \emptyset$ for $l\in
\overline{m+1,n+m}$ and $k=l-m$.

 Choose $B_{s'_l}\in \beta$ for
$l\in \overline{1,m}$ such that $x_i\in  B_{s'_l}$ and
$\overline{B_{s'_l}}\subseteq B_{s_l}$.

Choose $A_{k}\in \gamma$ for $k\in \overline{1,n}$ such that
$A_{k}\subseteq (U_k\bigcap B_{s_l})$ where $l=k+m$.

Choose different points $s_{k}, t_{k}\in A_{k}$ for every
$k=\overline{1,m}$.

 Let $S,T\in \beta$ such that
$\overline{S}\bigcap \overline{T}=\emptyset$,
$\overline{B_l}\bigcap \overline{S}=\emptyset$,
$\overline{B_l}\bigcap \overline{T}=\emptyset$ for $l\in
\overline{1,m}$ and $s_{k}\in S$ and $t_{k}\in T$ for all
$k=\overline{1,m}$.

Fix points $v_i\in (V_i\bigcap \mathbb{Q})$ for $i\in
\overline{1,m}$.

Choose $p,q\in \mathbb{Q}$ such that $p<\min\{r_i :
i=\overline{1,n}\}$ and $q>\max\{r_i : i=\overline{1,n}\}$.

Let $$f(x)= \left\{
\begin{array}{rcl}

p  & for &  x\in \overline{S}                              \\
q &  for &  x\in \overline{T}                 \\
v_l & for & x\in \overline{B_{s'_l}} \\
\end{array}
\right.
$$

where $l\in \overline{1,m}$.

Note that $f\in W\bigcap G$. This proves theorem.

\end{proof}

In \cite{os1} the following statements were proved.

\medskip

\begin{theorem}\label{th25}  Let $X$ be a Tychonoff space with countable
$\pi$-base, then the following are equivalent.

\begin{enumerate}

\item $C_{ph}(X)$ is a separable space.

\item $X$ is separable submetrizable space and it has a countable
$\pi$-network consisting of $\mathcal{I}$-sets.

\end{enumerate}

\end{theorem}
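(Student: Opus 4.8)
The plan is to prove the two implications separately, leaning on the facts recorded before Theorem \ref{th2} that separability of $C_{ph}(X)$ forces separability of both $C_p(X)$ and $C_h(X)$, and on the scheme of the proof of Theorem \ref{th2}, with connectedness replaced by $\mathcal{I}$-witnesses.

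For $(1)\Rightarrow(2)$, which is the easy direction, I would argue as follows. If $C_{ph}(X)$ is separable then $C_p(X)$ is separable, so $X$ is separable submetrizable, and $C_h(X)$ is separable. By Theorem 2.3 of \cite{os1} the latter yields a (possibly uncountable) $\pi$-network $\mathcal{N}$ of $X$ consisting of $\mathcal{I}$-sets. Here I invoke the standing hypothesis that $X$ has a countable $\pi$-base $\{B_n\}$: for each $n$ choose $A_n\in\mathcal{N}$ with $A_n\subseteq B_n$. Then $\{A_n\}$ is a countable family of $\mathcal{I}$-sets, and it is a $\pi$-network, since any nonempty open $U$ contains some $B_n$, whence $A_n\subseteq B_n\subseteq U$.

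For $(2)\Rightarrow(1)$ I would construct a countable dense set $G\subseteq C_{ph}(X)$, mirroring the implication $(3)\Rightarrow(1)$ of Theorem \ref{th2}. Let $\tau_1$ be a coarser separable metric topology with countable base $\{B_i\}$ closed under finite unions, and for each $\mathcal{I}$-set $A_n$ fix $g_n\in C(X)$ with $g_n(A_n)\supseteq[a_n,b_n]$, a nondegenerate interval. Define $G$ to be the countable set of functions built from finite data: choose finitely many $\tau_1$-basic sets with pairwise disjoint $\tau_1$-closures, assign to each either a rational constant or a rational affine transform $\alpha g_n+\beta$ with $\alpha,\beta\in\mathbb{Q}$ and $\alpha>0$, multiply each piece by a fixed $\tau_1$-Urysohn function equal to $1$ on the corresponding $\tau_1$-closure and supported on a slightly larger $\tau_1$-basic set (again with pairwise disjoint closures, available by perfect normality of the metric $\tau_1$), and sum. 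Since $\tau_1\subseteq\tau$, each such $f$ is $\tau$-continuous, the pieces do not interfere, and $G$ is countable because it is indexed by finite tuples of basic-set indices, rationals, and witness indices (fixing one function per index tuple). Given a base set $W=[x_1,V_1]^{+}\cap\dots\cap[x_m,V_m]^{+}\cap[U_1,r_1]^{-}\cap\dots\cap[U_n,r_n]^{-}$ in the normalized form of proposition 2.2 of \cite{amk}, I would, exactly as in the proof of Theorem \ref{th2}, separate the points $x_i$ and pick $\tau_1$-basic sets meeting each $U_k$ with pairwise disjoint closures, then choose $A_k\in\{A_n\}$ with $A_k\subseteq U_k\cap B_{s_{m+k}}$. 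On the piece around $x_i$ set $f$ equal to a rational $v_i\in V_i$, securing $f(x_i)\in V_i$; on the piece containing $A_k$ set $f=\alpha_k g_k+\beta_k$.

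The main obstacle is realizing the prescribed real value $r_k$ inside $U_k$ using only countably many functions; the device is the nondegeneracy of $[a_k,b_k]$ supplied by the $\mathcal{I}$-set. Since $f(A_k)=\alpha_k g_k(A_k)+\beta_k\supseteq[\alpha_k a_k+\beta_k,\alpha_k b_k+\beta_k]$, I fix any rational $\alpha_k>0$ and then a rational $\beta_k$ in the nonempty interval $[\,r_k-\alpha_k b_k,\ r_k-\alpha_k a_k\,]$ (of length $\alpha_k(b_k-a_k)>0$), so that $\alpha_k a_k+\beta_k\le r_k\le\alpha_k b_k+\beta_k$. By the intermediate value theorem $f$ attains $r_k$ at a point of $A_k\subseteq U_k$, giving $f\in[U_k,r_k]^{-}$, and disjointness of the $\tau_1$-closures keeps the point-open and open-point pieces from interfering, so $f\in W\cap G$. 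A secondary technical point, the gluing of the $\tau$-continuous pieces $\alpha_k g_k+\beta_k$ with the constant pieces while remaining inside a countable family, is exactly what the $\tau_1$-Urysohn multipliers together with $\tau_1\subseteq\tau$ resolve.
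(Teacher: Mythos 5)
Your proof is correct and takes essentially the same approach as the paper. The $(1)\Rightarrow(2)$ direction matches the paper's stated necessary conditions (separable submetrizability from separability of $C_{p}(X)$, a $\pi$-network of $\mathcal{I}$-sets from separability of $C_{h}(X)$) combined with the standard countable $\pi$-base refinement, and your $(2)\Rightarrow(1)$ construction --- rational constants and rational affine transforms of the $\mathcal{I}$-set witnesses, localized by $\tau_1$-Urysohn multipliers on pairs of basic sets with pairwise disjoint closures, summed and verified via the intermediate value theorem --- is precisely the argument of the paper's proof of Theorem~\ref{th28} (the functions $d_{j,k,v}$ and $h_{i,p,q,j,k}$ there, whose continuity you make explicit with the Urysohn factors).
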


\medskip

\begin{theorem}    Let $X$ be a Tychonoff space with countable $\pi$-base, then
the following are equivalent.

\begin{enumerate}

\item $C_{h}(X)$ is a separable space.

\item $X$  has a countable $\pi$-network consisting of
$\mathcal{I}$-sets.

\end{enumerate}
\end{theorem}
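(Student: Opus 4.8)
The plan is to get $(2)$ from $(1)$ by thinning a $\pi$-network and to get $(1)$ from $(2)$ by writing down an explicit countable dense subset of $C_h(X)$; essentially the whole weight of the theorem sits in the second implication. For $(1)\Rightarrow(2)$ I would just combine separability with the countable $\pi$-base. By Theorem~2.3 of \cite{os1}, separability of $C_h(X)$ already yields a $\pi$-network $\mathcal N$ consisting of $\mathcal I$-sets, a priori uncountable. Writing the countable $\pi$-base as $\{B_n\}$, for each $n$ I pick some $N_n\in\mathcal N$ with $N_n\subseteq B_n$; then $\{N_n:n\in\mathbb N\}$ is a countable family of $\mathcal I$-sets and is again a $\pi$-network, because any nonempty open set contains some $B_n\supseteq N_n$. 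This is routine.

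For $(2)\Rightarrow(1)$ let $\{A_m\}$ be the countable $\pi$-network of $\mathcal I$-sets, with witnesses $g_m\in C(X)$ and nondegenerate intervals $[a_m,b_m]\subseteq g_m(A_m)$. I first note that every nonempty open set contains some $A_m$ and hence has cardinality at least $\mathfrak c$, so every nonempty open set is infinite. Since $X$ is Tychonoff the cozero sets form a base, so I may refine the given $\pi$-base to a countable $\pi$-base $\{U_n\}$ of cozero sets and fix, for each $n$, a function $\psi_n\ge 0$ in $C(X)$ with $U_n=\{\psi_n>0\}$. I then let $D$ be the countable set of all functions
\[
 f=\sum_{l=1}^{k}\lambda_l\cdot(\theta_l\circ g_{m_l}),\qquad \lambda_l=\min\Big(1,\frac{\psi_{n_l}}{\varepsilon_l}\Big),
\]
taken over all finite tuples of indices $n_l,m_l$ with $A_{m_l}\subseteq U_{n_l}$, all rational affine maps $\theta_l$, and all positive rationals $\varepsilon_l$. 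Each $\lambda_l$ is continuous, equals $1$ on $\{\psi_{n_l}\ge\varepsilon_l\}$ and vanishes off $U_{n_l}$; the point of using cozero sets is precisely that such bump functions are available with \emph{no} normality hypothesis on $X$, which is what replaces the partition-of-unity step one has in the submetrizable case.

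To check density I take a nonempty basic set $W=\bigcap_{l=1}^{k}[V_l,s_l]^{-}$. Using that the $V_l$ are infinite and $X$ is Hausdorff, I choose distinct points $z_l\in V_l$, then pairwise disjoint open $O_l\subseteq V_l$ with $z_l\in O_l$, then a cozero $\pi$-base element $U_{n_l}\subseteq O_l$ and an $\mathcal I$-set $A_{m_l}\subseteq U_{n_l}$; the $U_{n_l}$ are then pairwise disjoint. For each $l$ I pick a rational affine $\theta_l$ placing $s_l$ in the interior of $\theta_l([a_{m_l},b_{m_l}])$, so that $c_l:=\theta_l^{-1}(s_l)\in(a_{m_l},b_{m_l})\subseteq g_{m_l}(A_{m_l})$ and there is $x_l\in A_{m_l}$ with $g_{m_l}(x_l)=c_l$. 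Since $x_l\in U_{n_l}$ we have $\psi_{n_l}(x_l)>0$, so I may choose a positive rational $\varepsilon_l\le\psi_{n_l}(x_l)$, giving $\lambda_l(x_l)=1$; as the $U_{n_l}$ are disjoint, every other summand vanishes at $x_l$, whence $f(x_l)=\theta_l(c_l)=s_l$ with $x_l\in V_l$. Thus the corresponding $f\in D$ lies in $W$, and $D$ is dense.

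The step I expect to be the main obstacle is exactly this simultaneous realization: manufacturing, from a fixed countable reservoir, one continuous function that hits several arbitrary prescribed reals on several $\mathcal I$-sets at once. Three choices carry the argument and are where the care is needed: passing to a cozero $\pi$-base so that the localizing bumps $\min(1,\psi/\varepsilon)$ exist without assuming $X$ normal; composing the witnesses with rational affine maps so that arbitrary reals can be attained from a countable family; and letting $\varepsilon_l$ run over \emph{all} positive rationals, which is forced because the value $c_l$ may be realized only at points of $A_{m_l}$ where $\psi_{n_l}$ is arbitrarily small. The remaining ingredients---the thinning in $(1)\Rightarrow(2)$ and the Hausdorff separation of the finitely many points $z_l$---are standard.
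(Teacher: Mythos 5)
Your proof is correct. One point of orientation: this paper never actually proves the stated theorem --- it is quoted from \cite{os1} --- so the fair comparison is with the closely related constructions the paper does give, namely the implication $(3)\Rightarrow(1)$ of Theorem \ref{th2} and the proof of Theorem \ref{th28}, both for $C_{ph}(X)$. Your argument follows the same general scheme as those: a countable dense family indexed by finitely many rational parameters, built by composing the witnesses $g_m$ of the $\mathcal{I}$-sets with rational affine maps so that arbitrary prescribed values $s_l$ can be hit, and localizing so that the finitely many prescriptions do not interfere. The genuine difference is in the localization. The paper's constructions draw their bump functions from a countable base of a coarser separable metric topology, which is available there because submetrizability is part of the hypotheses or equivalent conditions in the $C_{ph}$ results; in the present theorem condition (2) supplies no coarser metric topology at all, and your replacement --- cozero-set bumps $\min(1,\psi_{n}/\varepsilon)$, which exist by Tychonoffness alone --- is exactly the right substitute, and arguably cleaner than the paper's two-set bumps $h_{i,p,q,j,k}$, whose values on $B_k\setminus B_j$ are left implicit. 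A second difference: the paper reduces to basic open sets with pairwise disjoint closures $\overline{U_i}$ by citing Proposition 2.2 of \cite{amk}, whereas you handle arbitrary subbasic data $[V_l,s_l]^{-}$, including repeated $V_l$'s, by choosing distinct points $z_l$ (possible since each $V_l$ contains an $\mathcal{I}$-set, hence has cardinality at least $\mathfrak{c}$) and separating them by disjoint open sets; both work, but yours is self-contained. Your implication $(1)\Rightarrow(2)$ --- thinning the $\pi$-network provided by Theorem 2.3 of \cite{os1} through the countable $\pi$-base --- is the standard argument and is fine.
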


\medskip
The next result is corollary of Theorem \ref{th25}, but we notes
its as theorem due to the importance of the class of separable
metrizable spaces.

\begin{theorem}\label{th23} If $X$ is a separable metrizable space, then the following are
equivalent.

\begin{enumerate}

\item $C_{ph}(X)$ is a separable space.

\item $X$ has a countable $\pi$-network consisting of
$\mathcal{I}$-sets.

\end{enumerate}
\end{theorem}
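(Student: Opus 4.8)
The plan is to derive Theorem \ref{th23} as a direct corollary of Theorem \ref{th25}, exploiting the fact that a separable metrizable space automatically satisfies the structural hypotheses of the earlier theorem. The key observation is that any separable metrizable space $X$ has a countable base, hence a countable $\pi$-base, so Theorem \ref{th25} applies verbatim. Moreover, a separable metrizable space is trivially a separable submetrizable space, since its own metric topology serves as the coarser separable metric topology.

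First I would establish the implication $(1)\Rightarrow(2)$. Assuming $C_{ph}(X)$ is separable, I invoke Theorem \ref{th25}: since $X$ is separable metrizable it has a countable $\pi$-base, so the equivalence there gives that $X$ is separable submetrizable and has a countable $\pi$-network consisting of $\mathcal{I}$-sets. The second conjunct is exactly what $(2)$ asserts, so this direction follows immediately by discarding the (automatically satisfied) submetrizability clause.

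For the converse $(2)\Rightarrow(1)$, I would again apply Theorem \ref{th25}, but now I must supply \emph{both} conditions in its item (2): the countable $\pi$-network of $\mathcal{I}$-sets is given by hypothesis, and separable submetrizability comes for free from the fact that $X$ is already separable metrizable (the identity map onto $X$ with its metric topology witnesses the coarser separable metric topology). With both parts of condition (2) of Theorem \ref{th25} in hand, its equivalence yields that $C_{ph}(X)$ is separable, which is $(1)$.

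The step requiring the most care is recognizing that the single condition (2) of Theorem \ref{th23} suffices precisely because submetrizability is subsumed by the metrizability assumption on $X$; this is what allows one to drop the separable-submetrizable clause that appears explicitly in Theorem \ref{th25}. There is essentially no technical obstacle here, as the whole argument is a matter of checking that the hypotheses of the stronger theorem are met and that its extra structural requirement is vacuous for separable metrizable spaces. I would simply note that $X$ separable metrizable implies $X$ has a countable $\pi$-base and is separable submetrizable, and then cite Theorem \ref{th25} in both directions.
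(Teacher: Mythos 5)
Your proposal is correct and matches the paper's approach exactly: the paper itself presents this theorem as a corollary of Theorem \ref{th25}, obtained precisely by noting that a separable metrizable space is Tychonoff with a countable ($\pi$-)base and is trivially separable submetrizable, so the submetrizability clause in Theorem \ref{th25} becomes vacuous.
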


\medskip

We have already noted that if the space $C_{ph}(X)$ is a separable
space then

\medskip

$\bullet$  $X$ is a separable submetrizable;

$\bullet$  $X$ has a $\pi$-network consisting of
$\mathcal{I}$-sets.

\medskip

\begin{theorem}\label{th28} If $X$ is a separable submetrizable space
with countable $\pi$-network consisting of $\mathcal{I}$-sets,
then $C_{ph}(X)$ is separable space.
\end{theorem}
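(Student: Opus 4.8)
The plan is to imitate the construction of the countable dense set from the implication $(3)\Rightarrow(1)$ of Theorem \ref{th2}, replacing the use of connectedness (which there produced intermediate values via the intermediate value theorem) by the defining property of $\mathcal{I}$-sets. I would fix a coarser separable metric topology $\tau_1$ on $X$ with countable base $\beta$, and enumerate the countable $\pi$-network $\mathcal{N}=\{A_k\}_{k\in\mathbb{N}}$ of $\mathcal{I}$-sets; for each $k$ fix a witness $g_k\in C(X)$ and rationals $a_k<b_k$ with $g_k(A_k)\supseteq[a_k,b_k]$. Then I would define the countable set $G$ combinatorially: its data is a finite family of basic sets from $\beta$ with pairwise disjoint $\tau_1$-closures, each carrying a label that is either a rational constant or a pair $(k,\psi)$ with $k\in\mathbb{N}$ and $\psi$ a rational affine map of $\mathbb{R}$. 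For every such tuple I fix, once and for all, a single $f\in C(X)$ which on the closure of each basic set equals the prescribed constant, respectively equals $\psi\circ g_k$. There are only countably many tuples, so $G$ is countable.

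The first thing to check is that such an $f$ exists. Since the chosen $\tau_1$-closures are pairwise disjoint and $(X,\tau_1)$ is metric, hence normal, I can separate them by pairwise disjoint $\tau_1$-open sets and pick $\tau_1$-continuous bumps $\lambda$, each equal to $1$ on its closed set, supported in its neighbourhood, with pairwise disjoint supports; then $f$, defined as the sum over the labelled sets of the bump times its prescribed piece (the constant, respectively $\psi\circ g_k$), works. The crucial observation is that each $g_k$ is \emph{globally} defined and $\tau$-continuous, so although $\psi\circ g_k$ is only $\tau$-continuous, its product with a $\tau_1$-continuous bump is $\tau$-continuous on all of $X$; hence $f\in C(X)$, and on each labelled closure it reduces to its single active piece.

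For density, given a basic neighbourhood $W=[x_1,V_1]^{+}\cap\dots\cap[x_m,V_m]^{+}\cap[U_1,r_1]^{-}\cap\dots\cap[U_n,r_n]^{-}$ (in the normal form of Proposition 2.2 in \cite{amk}, with the $x_i$ distinct and the $\overline{U_j}$ pairwise disjoint), I would choose points $y_k\in U_k$, distinct from one another and from the $x_i$ (possible since each $U_k$ contains an $\mathcal{I}$-set, hence is uncountable), and surround the $m+n$ distinct points by basic sets with pairwise disjoint $\tau_1$-closures, say $B'_i\ni x_i$ and $Q_k\ni y_k$. Each $U_k\cap Q_k$ is a nonempty $\tau$-open set, so the $\pi$-network yields $A_{k'}\in\mathcal{N}$ with $A_{k'}\subseteq U_k\cap Q_k$. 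I then pick rationals $p_k<r_k<q_k$ and the increasing rational affine $\psi_k$ carrying $[a_{k'},b_{k'}]$ onto $[p_k,q_k]$, label $B'_i$ by a rational $v_i\in V_i$ and $Q_k$ by $(k',\psi_k)$, and take $f\in G$ fixed for this tuple. Then $f(x_i)=v_i\in V_i$, while on $\overline{Q_k}\supseteq A_{k'}$ we have $f=\psi_k\circ g_{k'}$, so $f(A_{k'})\supseteq\psi_k([a_{k'},b_{k'}])=[p_k,q_k]\ni r_k$; as $A_{k'}\subseteq U_k$, this gives $f^{-1}(r_k)\cap U_k\neq\emptyset$. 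Hence $f\in W\cap G$.

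The main obstacle I anticipate is precisely the step that departs from Theorem \ref{th2}: producing one countable family that nonetheless realizes each \emph{arbitrary real} $r_k$ on the prescribed open set, while keeping the global function continuous without assuming $X$ normal or the $\mathcal{I}$-sets compact. Both difficulties are absorbed by the same device — composing the fixed global witnesses $g_k$ with \emph{rational} affine maps (so $r_k$ is captured inside a rational image interval and never enters the data of $G$) and gluing the pieces by $\tau_1$-continuous bumps against the globally defined $g_k$. The remaining points are routine: finitely many distinct points of a separable metric space admit basic neighbourhoods with pairwise disjoint closures, and $\psi_k$ has genuinely rational coefficients since $a_{k'},b_{k'},p_k,q_k\in\mathbb{Q}$.
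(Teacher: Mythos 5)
Your proposal is correct and follows essentially the same route as the paper's own proof: both build a countable dense subset of $C_{ph}(X)$ by composing the fixed $\mathcal{I}$-set witnesses with rational affine recalibrations, gluing these pieces (and rational constants) along basic sets of the coarser separable metric topology with pairwise disjoint closures, and verifying density on the normal-form basic open sets of Proposition 2.2 of \cite{amk}. The only difference is presentational: your explicit $\tau_1$-continuous bumps with disjoint supports, rational interval endpoints, and the choice of the $y_k$ distinct from the $x_i$ make rigorous the gluing and bookkeeping that the paper's functions $h_{i,p,q,j,k}$ and $d_{j,k,v}$ leave implicit.
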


\begin{proof} The proof analogously to the proof of the
implication ($(2)\Rightarrow(1)$) in Theorem 2.4 (\cite{os1}).

 Let $S=\{S_i\}$ be a countable
$\pi$-network of $X$ consisting of  $\mathcal{I}$-sets. By
definition of $\mathcal{I}$-sets, for each $S_{i}\in S$ there is
the continuous function $h_{i}\in C(X)$ such that $h_{i}(S_{i})$
contains an interval $[a_{i},b_{i}]$ of real line. Consider a
countable set

$\{ h_{i,p,q}(x)=\frac{p-q}{a_{i}-b_{i}}*h_{i}(x)+p-
\frac{p-q}{a_{i}-b_{i}}*a_{i}\}$

of continuous functions on $X$, where $i\in \mathbb{N}$, $p,q\in
\mathbb{Q}$. Note that if $h_{i}(x)=a_i$ then $h_{i,p,q}(x)=p$ and
if $h_{i}(x)=b_i$ then $h_{i,p,q}(x)=q$.

 Let $\beta=\{B_j\}$ be countable base of $(X,\tau_1)$
where $\tau_1$ is separable metraizable topology on $X$ because of
$X$ is separable submetrizable space. For each pair
$(B_{j},B_{k})$ such that $\overline{B_{j}}\subseteq B_{k}$ define
continuous functions

$$h_{i,p,q,j,k}(x)= \left\{
\begin{array}{rcl}
h_{i,p,q}(x) \,\, \, \, \, \, for \, \, \,  x\in B_{j}  \\
$\bf{0}$ \, \, \, \, \, \, \, \, \,  for \, \, \,  x\in X\setminus B_{k}.   \\
\end{array}
\right.
$$

 and for each $v\in \mathbb{Q}$

$$d_{j,k,v}(x)= \left\{
\begin{array}{rcl}

v  & for &  x\in B_{j}                              \\
$\bf{0}$ &  for &  x\in X\setminus B_{k}.   \\
\end{array}
\right.
$$

Let $G$ be the set of finite sum of functions $h_{i,p,q,j,k}$ and
$d_{j,k,v}$ where $i,j,k\in\mathbb{N}$ and $p,q,v\in \mathbb{Q}$.
We claim that the countable set $G$ is dense set of $C_{ph}(X)$.

 By proposition 2.2 in \cite{amk}, let

 $W=[x_1,V_1]^{+}\bigcap...\bigcap[x_m,V_m]^{+}\bigcap[U_1,r_1]^{-}\bigcap...\bigcap[U_n,r_n]^{-}$
be a base set of $C_{ph}(X)$ where $n,m\in\mathbb{N}$, $x_{i}\in
X$, $V_{i}$ is open set of $\mathbb{R}$ for $i\in \overline{1,m}$,
$U_{j}$ is open set of $X$ and $r_j\in \mathbb{R}$ for $j\in
\overline{1,n}$ and for $i\neq j$, $x_{i}\neq x_{j}$ and
$\overline{U_{i}}\bigcap \overline{U_{j}}=\emptyset$.

Fix points $y_j\in U_j$ for $j=\overline{1,n}$ and choose
$B_{s_l}\in \beta$ for $l=\overline{1,n+m}$ such that
$\overline{B_{s_{l_1}}}\bigcap \overline{B_{s_{l_2}}}=\emptyset$
for $l_1\neq l_2$ and $l_1,l_2\in \overline{1,n+m}$ and $x_i\in
B_{s_l}$ for $l\in \overline{1,m}$ and $y_j\in B_{s_l}$ for $l\in
\overline{m+1,n}$. Choose $B_{s'_l}\in \beta$ for $l\in
\overline{1,m}$ such that $x_i\in B_{s'_l}$ and
$\overline{B_{s'_l}}\subseteq B_{s_l}$ and choose $B_{s'_l}\in
\beta$ for $l\in \overline{m+1,n+m}$  such that
$y_j\in\overline{B_{s'_l}}\subseteq B_{s_l}$ where $l=j+m$.

Fix points $v_i\in (V_i\bigcap \mathbb{Q})$ for $i\in
\overline{1,m}$ and $p_j,q_j\in \mathbb{Q}$ such that
$p_j<r_j<q_j$ for $j=\overline{1,n}$.

Consider $g\in G$ such that

$g=d_{s'_{1},s_{1},v_1}+...+d_{s'_{m},s_{m},v_m}+h_{i_1,p_1,q_1,s'_{m+1},s_{m+1}}+...+h_{i_n,p_n,q_n,s'_{m+n},s_{n+m}}$
where $S_{i_k}\subset B_{s'_l}\bigcap U_k$ for $k=\overline{1,n}$
and $l=k+m$.

Note that $g\in W\bigcap G$. This proves theorem.

\end{proof}

\begin{corollary} If $X$ is a perfect Polish space, then $C_{ph}(X)$
is separable.
\end{corollary}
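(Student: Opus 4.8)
The plan is to derive this as an immediate application of Theorem \ref{th28}. A perfect Polish space $X$ is in particular separable metrizable, hence trivially separable submetrizable: its own metric topology serves as a coarser separable metric topology. Consequently it suffices to produce a \emph{countable $\pi$-network of $X$ consisting of $\mathcal{I}$-sets}, after which Theorem \ref{th28} yields separability of $C_{ph}(X)$ at once.

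To build such a $\pi$-network I would first fix a countable base $\{B_n\}_{n\in\mathbb{N}}$ of $X$, available by second countability of Polish spaces. The key observation is that every nonempty open subset $U\subseteq X$ is again a perfect Polish space: open subspaces of Polish spaces are Polish, and a point is isolated in $U$ exactly when it is isolated in $X$, so $U$ inherits the absence of isolated points. By the perfect set theorem (Cantor--Bendixson), every nonempty perfect Polish space contains a homeomorphic copy of the Cantor set $2^{\omega}$. Thus for each $n$ I can choose a Cantor set $C_n\subseteq B_n$. Given any nonempty open $U\subseteq X$ there is a basic set with $\emptyset\neq B_n\subseteq U$, whence $C_n\subseteq U$; so $\{C_n\}_{n\in\mathbb{N}}$ is a countable $\pi$-network of $X$.

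It then remains to check that each $C_n$ is an $\mathcal{I}$-set of $X$. Here I would use a continuous surjection $g\colon C_n\to[0,1]$ of the Cantor set onto the unit interval. Since $C_n$ is compact, hence closed in the metrizable (thus normal) space $X$, the Tietze extension theorem provides a continuous $f\in C(X)$ with $f|_{C_n}=g$. Then $f(C_n)=[0,1]$ contains the interval $[0,1]$, so $C_n$ is an $\mathcal{I}$-set by definition. This verifies both hypotheses of Theorem \ref{th28}, completing the argument.

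The proof is short because the substantive work is carried entirely by Theorem \ref{th28}; the only genuine input specific to the hypothesis ``perfect Polish'' is the presence of a Cantor set inside every nonempty open set. The mildest point needing care is that an $\mathcal{I}$-set must be witnessed by a function defined on all of $X$, not merely on the Cantor subset — but this is precisely what the Tietze extension supplies, so no real obstacle arises. One may note in passing that this also recovers the necessary condition $\Delta(X)=\mathfrak{c}$ observed earlier, since each nonempty open set now contains a Cantor set of cardinality $\mathfrak{c}$.
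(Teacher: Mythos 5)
Your proof is correct and follows essentially the same route as the paper: both arguments extract a copy of $2^{\omega}$ inside every nonempty open set of the perfect Polish space and then invoke Theorem~\ref{th28} via a countable $\pi$-network of $\mathcal{I}$-sets (the paper uses the open sets themselves as the $\pi$-network, you use the Cantor sets, an immaterial difference). Your write-up is in fact more careful than the paper's one-line proof, since you make explicit the Tietze extension step showing that a Cantor subset really is an $\mathcal{I}$-set witnessed by a function on all of $X$.
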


\begin{proof} It follows immediately from fact that any regular closed subset of a
space $X$ is a perfect Polish space and it contains some set which
is homeomorphic to $2^{\omega}$ (\cite{kur}). It follows that any
non-empty open set of $X$ is $\mathcal{I}$-set.
\end{proof}

Note that there is the example (Example 4.3. in \cite{os1}) such
that

\medskip

$\bullet$   $X$ hasn't countable chain condition, hence, $X$
hasn't countable $\pi$-network consisting of $\mathcal{I}$-sets;

$\bullet$ $X$ is separable submetrizable space;

$\bullet$ $C_{ph}(X)$ is separable space.

\begin{example} Let $X=\oplus_{\alpha<\mathfrak{c}} \mathbb{R}_{\alpha}$ be
a direct sum of real lines $\mathbb{R}$.
\end{example}

\medskip

In this connection a natural question arises.

\medskip

{\it Question 1.} Let $X$ be a separable submetrizable space with
uncountable $\pi$-network consisting of $\mathcal{I}$-sets. Is
$C_{ph}(X)$ separable ?

\medskip

Recall that a set of reals $X$ is {\it null} if for each positive
$\epsilon$ there exists a cover $\{I_{n}\}_{n\in \mathbb{N}}$ of
$X$ such that $\sum_n diam(I_n)< \epsilon$. A set of reals $X$ has
{\it strong measure zero} if, for each sequence
$\{\epsilon_n\}_{n\in \mathbb{N}}$ of positive reals, there exists
a cover $\{I_{n}\}_{n\in \mathbb{N}}$ of $X$ such that
$diam(I_n)<\epsilon_n$ for all $n$. For example, every Lusin set
has strong measure zero.

\medskip
In \cite{os1} (Example 3.1) was shown that it is consistent with
ZFC
 that
 exists the separable metrizable space $X$ such that  $\Delta(X)=\mathfrak{c}$ and $C_{ph}(X)$
isn't separable.

\medskip

\begin{example}$(\bf CH)$ Let $X$ be a set of reals and it has strong measure zero.
\end{example}

 In \cite{mill} was shown that it is consistent with ZFC that for any set of
 reals of cardinality the continuum, there is a (uniformly) continuous map
 from that set onto the closed unit interval. In fact, this holds
 in the iterated perfect set model.

In \cite{os1} the following statement was proved.

\begin{theorem}( the iterated perfect set model)\label{th20}

If $X$ is a separable metrizable space, then  the following are
equivalent.

\begin{enumerate}

\item $C_{ph}(X)$ is a separable space.

\item  $\Delta(X)=\mathfrak{c}$.

\end{enumerate}

\end{theorem}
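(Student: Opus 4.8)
$(1)\Rightarrow(2)$ holds in ZFC and is essentially recorded in the preliminary remarks: if $C_{ph}(X)$ is separable then $C_h(X)$ is separable, whence $\Delta(X)\ge\mathfrak{c}$, while $C_p(X)$ separable forces $X$ to be separable submetrizable; since $X$ is moreover separable metrizable we get $|X|\le\mathfrak{c}$ and thus $\Delta(X)\le|X|\le\mathfrak{c}$. Combining the two inequalities yields $\Delta(X)=\mathfrak{c}$. So the entire content, and the only place where the iterated perfect set model is used, is the implication $(2)\Rightarrow(1)$.

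For $(2)\Rightarrow(1)$ the plan is to invoke Theorem \ref{th23}: since $X$ is separable metrizable, it suffices to exhibit a countable $\pi$-network consisting of $\mathcal{I}$-sets. Fix a countable base $\{B_n\}$ of $X$; as a base it is in particular a $\pi$-network, so it is enough to show that each $B_n$ is itself an $\mathcal{I}$-set, i.e. to produce $f_n\in C(X)$ with $f_n(B_n)$ containing an interval. Because $\Delta(X)=\mathfrak{c}$, every nonempty open set is uncountable, so $X$ has no isolated points and $|B_n|=\mathfrak{c}$. The key step is to locate inside each $B_n$ a subspace $Y_n$ which, with the metric inherited from $X$, is a \emph{set of reals} of cardinality $\mathfrak{c}$ (zero-dimensional, separable metrizable, with every nonempty relatively open subset of cardinality $\mathfrak{c}$). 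Granting this, I apply Miller's theorem, valid in the iterated perfect set model, to obtain a \emph{uniformly} continuous surjection $g_n\colon Y_n\to[0,1]$. Uniform continuity is exactly what lets me pass to a global function: every point of the closure $\overline{Y_n}$ taken in $X$ is a limit of a $d$-Cauchy sequence from $Y_n$, whose $g_n$-images form a Cauchy sequence in $[0,1]$, so $g_n$ extends continuously to a map $\widetilde{g_n}\colon \overline{Y_n}\to[0,1]$ on the closed set $\overline{Y_n}$. Since $X$ is metrizable, hence normal, Tietze's theorem extends $\widetilde{g_n}$ to $f_n\in C(X)$ with values in $[0,1]$, and then $f_n(B_n)\supseteq f_n(Y_n)=g_n(Y_n)=[0,1]$. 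Thus each $B_n$ is an $\mathcal{I}$-set, $\{B_n\}$ is the required countable $\pi$-network, and Theorem \ref{th23} gives that $C_{ph}(X)$ is separable.

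The step I expect to be the main obstacle is the extraction of the set-of-reals subspace $Y_n\subseteq B_n$, which is a purely ZFC matter independent of the model. When $X$ is zero-dimensional this is immediate: $X$ has a clopen base, so I may refine $\{B_n\}$ to a countable family of nonempty clopen sets, each of which is automatically a set of reals (being open it inherits $\Delta=\mathfrak{c}$), and in this case the extension above is trivial since one can define $f_n$ to be $g_n$ on the clopen piece and $0$ elsewhere. The difficulty is the general, positive-dimensional case. My intended route is to reduce to the zero-dimensional situation inside $B_n$: using that a separable metric space is ccc, for each center only countably many spheres can have nonempty interior, so $X$ admits a base of balls whose boundaries are nowhere dense; deleting the countable union of these boundaries from $B_n$ leaves a zero-dimensional subspace whose distinguished clopen traces form a base. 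The delicate point is guaranteeing that this deletion preserves the everywhere-cardinality-$\mathfrak{c}$ requirement, since in a non-Baire or pathological $X$ a meager set may be large; handling this — ensuring that a zero-dimensional, dense-in-itself, everywhere-$\mathfrak{c}$ subspace of full cardinality survives inside every nonempty open set — is where the real work lies, and is the part of the argument I would develop most carefully before feeding the resulting $Y_n$ into Miller's theorem.
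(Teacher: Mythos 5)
Your overall frame matches the paper's: $(1)\Rightarrow(2)$ is exactly the paper's preliminary remark, and for $(2)\Rightarrow(1)$ the paper likewise reduces everything to producing a countable $\pi$-network of $\mathcal{I}$-sets and quoting Theorem~\ref{th23} (via Theorem~\ref{th28}). But your method for showing that basic open sets are $\mathcal{I}$-sets has a genuine gap, and you say so yourself: the extraction of a zero-dimensional, everywhere-of-cardinality-$\mathfrak{c}$ subspace $Y_n\subseteq B_n$ is never carried out, and it is not a routine technicality. Your proposed repair (deleting boundaries of balls) founders exactly where you suspect: $X$ need not be Baire --- a separable metric space can be meager in itself and still have $\Delta(X)=\mathfrak{c}$ --- so deleting countably many nowhere dense closed sets can destroy the everywhere-$\mathfrak{c}$ property; and for infinite-dimensional $X$ one cannot fall back on the decomposition of an $n$-dimensional space into $n+1$ zero-dimensional pieces, so it is not even clear in ZFC that a suitable $Y_n$ exists at all. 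There is also a second, quieter gap: Miller's theorem gives a map that is uniformly continuous with respect to the euclidean metric on a set of reals, i.e.\ on a copy of $Y_n$ embedded in $\mathbb{R}$, whereas your $Y_n$ carries the metric inherited from $X$; the homeomorphism between the two copies need not be a uniform homeomorphism, so the Cauchy-sequence extension of $g_n$ to $\overline{Y_n}$ does not follow as stated.

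The paper's argument (reproduced in step (I) of the proof of Theorem~\ref{th21}) shows that both difficulties are unnecessary, via a dichotomy applied to the \emph{closed} set $M=\overline{B_n}$, which has cardinality $\mathfrak{c}$ because $\mathfrak{c}=\Delta(X)\le|M|\le|X|\le\mathfrak{c}$. Either some $f\in C(M)$ has image containing an interval, in which case Tietze extension from the closed set $M$ makes $\overline{B_n}$ an $\mathcal{I}$-set with no appeal to Miller at all; or every continuous real-valued image of $M$ contains no interval. In the latter case, if all images have cardinality $<\mathfrak{c}$ then all are zero-dimensional subsets of $\mathbb{R}$, and embedding $M$ into $[0,1]^{\omega}$ coordinatewise exhibits $M$ as a subspace of a countable product of zero-dimensional spaces, so $M$ itself is zero-dimensional, hence a set of reals of cardinality $\mathfrak{c}$ to which Miller's theorem applies directly; if instead some image $f(M)$ has cardinality $\mathfrak{c}$ (and no interval), then $f(M)$ is already a set of reals of cardinality $\mathfrak{c}$ sitting inside $\mathbb{R}$, Miller gives a continuous surjection $g\colon f(M)\to[0,1]$, and $g\circ f$ works. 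In every case one obtains a continuous map defined on all of the closed set $\overline{B_n}$ with image containing $[0,1]$, and Tietze finishes; the sets $\overline{B_n}$ then form the countable $\pi$-network of $\mathcal{I}$-sets required by Theorem~\ref{th23}. The moral: apply the interval/zero-dimensionality dichotomy to $\overline{B_n}$ itself and to its images in $\mathbb{R}$, rather than trying to manufacture a zero-dimensional subspace inside $B_n$; that single idea is what your proposal is missing.
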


In the present paper, we consider more wider form this theorem.

\begin{theorem} ( the iterated perfect set model)\label{th21}
If $X$ is a regular space with a countable network, then the following are equivalent.

\begin{enumerate}

\item $C_{ph}(X)$ is a separable space.

\item  $\Delta(X)=\mathfrak{c}$.

\item $X$ has a countable $\pi$-network consisting of
$\mathcal{I}$-sets.

\end{enumerate}

\end{theorem}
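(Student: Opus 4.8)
The plan is to prove Theorem \ref{th21} by combining the earlier structural results with the special set-theoretic hypothesis of the iterated perfect set model. The key observation is that a regular space with a countable network is separable and submetrizable, and in fact has a countable $\pi$-base, so the machinery already developed (Theorems \ref{th25}, \ref{th28}) applies directly. My strategy is to establish the cycle $(1)\Rightarrow(2)\Rightarrow(3)\Rightarrow(1)$, where only one implication genuinely requires the model hypothesis.

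First I would handle $(1)\Rightarrow(2)$. This needs no special axioms: it was already remarked at the start of Section 2 that if $C_{ph}(X)$ is separable, then $C_{h}(X)$ is separable, and the very first paragraph of the main results shows that separability of $C_{h}(X)$ forces $\Delta(X)\ge\mathfrak{c}$. Since $X$ embeds (via submetrizability) into a separable metric space and every point has a neighborhood of size at most $\mathfrak{c}$, we get $\Delta(X)=\mathfrak{c}$. Next, for $(3)\Rightarrow(1)$, I would invoke Theorem \ref{th28}: a regular space with a countable network is separable submetrizable, so once we know $X$ has a countable $\pi$-network consisting of $\mathcal{I}$-sets, Theorem \ref{th28} immediately yields that $C_{ph}(X)$ is separable. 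Both of these directions are essentially bookkeeping against the stated prior results.

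The hard part will be $(2)\Rightarrow(3)$, and this is precisely where the iterated perfect set model enters. I must produce, from the mere assumption $\Delta(X)=\mathfrak{c}$, a countable $\pi$-network of $\mathcal{I}$-sets. Here is the plan: since $X$ has a countable network, fix a countable $\pi$-base (equivalently work with the countably many regular-closed sets arising from network elements). For each nonempty open set $U$, the hypothesis $\Delta(X)=\mathfrak{c}$ guarantees $|U|=\mathfrak{c}$. Now I appeal to the result of Miller cited before Theorem \ref{th20}: in the iterated perfect set model, every set of reals of cardinality $\mathfrak{c}$ admits a continuous surjection onto $[0,1]$. Since $X$ is submetrizable and separable (indeed a set of reals after the coarser metrization), each such $U$ of size $\mathfrak{c}$ carries a continuous map onto $[0,1]$, which—after extending to a continuous function on $X$ using regularity/submetrizability—shows $U$ is an $\mathcal{I}$-set. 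Applying this to a countable $\pi$-base produces the desired countable $\pi$-network of $\mathcal{I}$-sets.

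The main obstacle I anticipate is the extension step: Miller's theorem gives a continuous map on the subspace $U$ (viewed inside the coarser metric topology), but the definition of $\mathcal{I}$-set demands a function $f\in C(X)$ on the whole space whose image on the set contains an interval. I would resolve this by first passing to a suitable regular-closed subset contained in $U$, using that $X$ is submetrizable to realize it as a set of reals of cardinality $\mathfrak{c}$, applying Miller's surjection there, and then extending continuously over $X$ via the Tietze-type extension available in the metrizable ambient space together with the coarser-topology embedding. Care must be taken that the countable $\pi$-base is chosen so that these regular-closed pieces genuinely form a $\pi$-network; this is where the countable-network hypothesis does the real work, since it ensures we only need countably many such $\mathcal{I}$-sets to achieve the $\pi$-network property.
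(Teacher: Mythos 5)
Your skeleton---$(1)\Rightarrow(2)$ and $(3)\Rightarrow(1)$ as bookkeeping against earlier results, with the model hypothesis isolated in $(2)\Rightarrow(3)$---is exactly the paper's. But your $(2)\Rightarrow(3)$ has a genuine gap at the precise point where the iterated perfect set model is supposed to do its work. Miller's theorem applies to \emph{sets of reals}, i.e.\ (in the paper's convention and in Miller's) zero-dimensional separable metrizable spaces; your assertion that $X$ is ``indeed a set of reals after the coarser metrization'' is unjustified and in general false, since the coarser separable metric topology need not be zero-dimensional (think of $X=\mathbb{R}^2$, or any submetrizable space whose condensation image has positive dimension; for such $X$ the conclusion happens to be easy, but your argument, which appeals to Miller on an arbitrary size-$\mathfrak{c}$ subspace of the metrization, is simply not licensed). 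The paper closes this hole with a dichotomy (step (I) of its proof): for a separable metrizable $M$ with $|M|=\mathfrak{c}$, either some continuous real-valued image of $M$ has cardinality $\mathfrak{c}$---and then that image either contains an interval (done), or, containing no interval, is a zero-dimensional subset of $\mathbb{R}$, hence a set of reals to which Miller's theorem applies---or every such image has cardinality $<\mathfrak{c}$, which forces $M$ itself to be zero-dimensional (a non-zero-dimensional separable metrizable space admits a Urysohn function whose image contains an interval), so Miller applies to $M$ directly. Without this case analysis the appeal to Miller is invalid, and this case analysis is the actual content of the implication.

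Two secondary problems. First, your claim that a regular space with countable network ``in fact has a countable $\pi$-base'' is false: $C_p([0,1])$ has a countable network but $\pi$-weight $\mathfrak{c}$, so Theorem \ref{th25} is not available here. What the paper uses instead (and what your parenthetical gestures at) is a countable $\pi$-network of \emph{closed} sets of cardinality $\mathfrak{c}$, obtained by taking closures of network elements inside regular-closed neighborhoods and discarding the small ones, using that the cofinality of $\mathfrak{c}$ is uncountable. Second, your extension step is shakier than necessary: a closed $\pi$-network element $A$ of $X$ need not be closed in the coarser metric topology, so ``Tietze in the metrizable ambient space'' does not directly apply. The paper extends over $X$ itself: a regular space with a countable network is Lindel\"{o}f and hence normal, $A$ is closed in $X$, and the Miller-type map on $A$ is continuous with respect to $X$'s topology, so the Tietze--Urysohn theorem for $X$ gives the required $F\in C(X)$ with $F(A)\supseteq\mathcal{I}$.
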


\begin{proof}
$(1)\Rightarrow(2)$. Note that if the space $C_{ph}(X)$ is a
separable space then $C_{p}(X)$ is a separable space and
$C_{h}(X)$ is a separable. It follows that $X$ is a separable
submetrizable space and, hence, $\Delta(X)=\mathfrak{c}$.

$(2)\Rightarrow(3)$. Let $\Delta(X)=\mathfrak{c}$.

(I). We show that any separable metrizable space $M$ of
cardinality $\mathfrak{c}$ is $\mathcal{I}$-set of $M$, i.e. there
exist a continuous function $f:M\mapsto \mathbb{R}$ such that
$f(M)\supseteq \mathcal{I}$.

Really, if a real-valued continuous image of space $M$ has
cardinality less $\mathfrak{c}$ for any $f\in C(M)$, then $M$ is
zero-dimensional space. It follows that $M$ is set of reals and,
by the iterated perfect set model, there is a continuous map from
that set onto the closed unit interval $\mathcal{I}$.

If there is real-valued continuous image of space $M$ such that it
has cardinality $\mathfrak{c}$, then either it contains an
interval $\mathcal{I}$ or it is set of reals and, again, by the
iterated perfect set model, there is a continuous map from that
set onto the closed unit interval $\mathcal{I}$.

(II). Recall that a regular space with a countable network is
normal and separable submetrizable space. Since
$\Delta(X)=\mathfrak{c}$ and $X$ is regular space with a countable
network, it follows that $X$ has countable $\pi$-network $\alpha$
consisting of closed subsets of cardinality $\mathfrak{c}$ of $X$.

We show that $\alpha$ is required $\pi$-network.

Let $f$ be a condensation from $X$ onto a separable metrizable
space. Fix $A\in \alpha$ and consider a mapping
$h=f\upharpoonright A$. By point (I), $h(A)$ is $\mathcal{I}$-set
of $h(A)$, i.e. there exist a continuous function $f:h(A)\mapsto
\mathbb{R}$ such that $f(h(A))\supseteq \mathcal{I}$. Since $X$ is
normal space, by Tietze-Urysohn Extension Theorem, the map $f\circ
h$ can be extended to a real-valued continuous map $F: X \mapsto
\mathbb{R}$. Note that $F(A)=f(h(A))\supseteq \mathcal{I}$ i.e.
$A$ is $\mathcal{I}$-set of $X$.

$(2)\Rightarrow(3)$. It follows from  Theorem \ref{th28}.

\end{proof}

\section{Acknowledgement}

This work was supported by Act 211 Government of the Russian
Federation, contract ¹ 02.A03.21.0006.

\bibliographystyle{model1a-num-names}
\bibliography{<your-bib-database>}







\end{document}